\DeclareMathAlphabet{\pazocal}{OMS}{zplm}{m}{n}
\newtheorem{thm}{Theorem}[section]
\newtheorem{cor}[thm]{Corollary}
\newtheorem{lem}[thm]{Lemma}
\newtheorem{prop}[thm]{Proposition}
\theoremstyle{definition}
\newtheorem{defn}[thm]{Definition}
\theoremstyle{remark}
\newtheorem{example}{Example}[section]
\newcommand{\Z}{\mathbb Z}
\newcommand{\C}{\mathbb C}
\newcommand{\A}{\EuScript A}
\title[Length and decomposition]{Length and decomposition of the cohomology of the complement to a hyperplane arrangement}
\author[R.~B\o gvad]{Rikard B\o gvad}
\address{Department of Mathematics, Stockholm University, SE-106 91
Stockholm,         Sweden}
\email {rikard@math.su.se }
\author[I.~Gon\c calves]{Iara Gon\c calves}
\address{Department of Mathematics, Stockholm University, SE-106 91
Stockholm,         Sweden}
\email{iara@math.su.se}
\begin{document}

\begin{abstract}
Let $\A$ be a hyperplane arrangement in $\C^n$.
We prove in an elementary way that the number of decomposition factors as a perverse sheaf of the direct image $Rj_*\C_{\tilde U}[n]$ of the constant sheaf on the complement ${\tilde U}$ to the arrangement is given by the Poincar\'e polynomial of the 
arrangement. Furthermore we describe the decomposition factors of $Rj_*\C_{\tilde U}[n]$ as certain local cohomology 
sheaves and give their multiplicity. These results are implicitly contained, with different proofs, in Loiijenga 
\cite{EL}, Budur and Saito\cite{BuS}, Petersen\cite{Pet} and Oaku \cite{Oa}.
\end{abstract}

\maketitle

\section{Introduction}

Let $Perv(X)$ be the category of perverse sheaves on a complex smooth algebraic variety $X$ (with respect to the middle perversity). Recall that $Perv(X)$ is an abelian category, where every object has a finite decomposition series. 
If $\pi:X\to Y$ is a proper map and $M\in Perv(X)$ a perverse sheaf, then by the decomposition theorem (see \cite{CM}) $N:=R\pi_*M$ is semi-simple. This is not true if $\pi$ is not proper, and it is then natural to ask for properties 
of the decomposition series of $R\pi_*M$.  
%For example, one may ask for a 
%description of the socle series, defined iteratively in an abelian category in the following way
%\begin{equation}
%\label{eq:socseries}
%\text{soc}^0(N) =0\text{ and } \frac{\text{soc}^{i+1}N}{\text{soc}^{i}N}=\text{soc}\big(\frac{N}{\text{soc}^{i}N}\big)
%\end{equation}

In this note we give in an elementary way such a description in a special situation: $M=\C_{\tilde U}$ is the constant sheaf and $j:{\tilde U} \to \C^n $ is the inclusion of the complement ${\tilde U}$ to a hyperplane arrangement 
$\A$ in $\C^n$. The cohomology of this sheaf has been subject to much study, see the book by Orlik and Terao \cite{OT}. Our main result is that the number of perverse decomposition factors of the direct image $Rj_*\C_{\tilde U}[n]$ 
equals the Poincar\'e polynomial of the arrangement, using a Mayer-Vietoris sequence. In addition our proof gives in an explicit form the decomposition factors, and using a result of Jewell~\cite{J}, their multiplicity. 

By the Riemann-Hilbert correspondence these results relate to considering $\mathcal O_{\tilde U}$ as a module over the Weyl algebra $A_n$.  Results similar via this correspondence to our results are contained in a recent work 
of Oaku \cite{Oa}.

After the completion of this work, we were made aware that 
several authors have published work which implicitly contains our results.
Loiijenga 
\cite[2.4.1]{EL}, constructs(for an affine hyperplane arrangement) a complex $K$ quasi-isomorphic to $Rj_!\C_{\tilde U}[n]$, and determines the associated graded object of the weight filtration on $K$, as a direct sum with given multiplicity of our $N_F$. Since the weight filtration respects perversity(see \cite{BBD}) and the $N_F$ are irreducible perverse sheaves, our results follow. One may also extract them from similar constructions and determination of the weight filtration in Budur-Saito \cite[1.7-9]{BuS}(on projective hyperplane arrangements). Finally, Petersen \cite[Thm. 1.1, Example 3.10]{Pet} describes in general a spectral sequence associated to a stratified space that is compatible with the perverse filtration. In the case of hyperplane arrangements this sequence degenerates and gives Looijenga's result. 

We hope that an explicit statement of the length and decomposition of $Rj_*\C_{\tilde U}[n]$, and an elementary proof that only uses deletion-and-restriction and basic properties of perverse sheaves, still merits interest.

One may ask for similar results for an arbitrary locally constant sheaf. The case of central line arrangements and a rank 1 locally constant sheaf is treated in \cite{AB1,BG}. See also \cite{Bu2} for more general results.

\vspace{0.2cm}

\section{Notation and preliminaries}
Let $\A$ be an affine hyperplane arrangement of $m+1$ hyperplanes $H_0, \ldots, H_m$ in $\C^n$. The arrangement defines a stratification $\Sigma=\Sigma_\A$ of 
$\C^n$ by {\it flats}, that is intersections of subsets of hyperplanes. As a general reference on hyperplane arrangements we use \cite{OT}.

For the complex variety $X=\C^n$ of dimension $n$ and the stratification $\Sigma$ by flats, let $D_\Sigma(X)$ denote the derived category of complexes of sheaves that are constructible with respect to $\Sigma$. 
We let $Perv(X)$ be complexes of sheaves in $D_\Sigma(X)$ that are perverse with respect to this stratification, for the middle perversity $p$. As a general reference on intersection cohomology we use \cite{BBD}. Recall that if $F$ is a flat of $\C^n$, 
then $p(F)= -dim _{\C}(F)$. For this perversity, a locally constant sheaf placed in degree $-n$ is a perverse sheaf (\cite{BBD}, pp.63-64). In particular the constant sheaf $\C[n]$ is perverse. We will denote by $\C_F$ the constant sheaf of rank one on the flat $F$.
All subvarieties of $\C^n$ will be assumed to have a stratification induced by the stratification of $\C^n$, or possibly by a subarrangement. Consequently, constructibility with respect to the filtration will be respected by functors such as direct images, and in the notation we will suppress reference to the stratification. 

The following standard lemma from \cite{BBD} is useful to describe exact sequences in $Perv(X)$.

\begin{lem} 
\label{lem:critexactseq}Suppose that 
$$N \rightarrow M \rightarrow L \rightarrow$$
is a distinguished triangle in $D(X)$. 
Then 
$$ 0\rightarrow N \rightarrow M \rightarrow L \rightarrow 0,$$ is an exact sequence in $Perv(X)$ in either of the following two situations:
\begin{itemize}
\item[(i)]If $N,M,$ and $L$ are perverse sheaves.
\item[(ii)] If $N,M$ are perverse sheaves and $N\to M$ is an injection in $Perv(X)$.
 \end{itemize}
\end{lem}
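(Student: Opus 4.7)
The plan is to exploit the long exact sequence of perverse cohomology sheaves associated to a distinguished triangle, together with the elementary fact (coming from the definition of a $t$-structure) that a complex $K \in D(X)$ is perverse if and only if ${}^p\!H^i(K) = 0$ for all $i \neq 0$, in which case $K \simeq {}^p\!H^0(K)$. I would begin by writing the long exact sequence attached to the distinguished triangle $N \to M \to L \to N[1]$,
$$\cdots \to {}^p\!H^{i-1}(L) \to {}^p\!H^{i}(N) \to {}^p\!H^{i}(M) \to {}^p\!H^{i}(L) \to {}^p\!H^{i+1}(N) \to \cdots,$$
and then examine what each hypothesis imposes on its terms.

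Case (i) is immediate: since $N$, $M$, and $L$ are all perverse, every ${}^p\!H^i$ with $i \neq 0$ vanishes, and the long exact sequence collapses to $0 \to N \to M \to L \to 0$ in $Perv(X)$.

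For case (ii), the vanishing of ${}^p\!H^i(N)$ and ${}^p\!H^i(M)$ for $i \neq 0$ propagates through the long exact sequence and forces ${}^p\!H^i(L) = 0$ for every $i \notin \{-1,0\}$. The nontrivial portion that survives reads
$$0 \to {}^p\!H^{-1}(L) \to N \to M \to {}^p\!H^0(L) \to 0,$$
and the hypothesis that $N \to M$ is injective in $Perv(X)$ (equivalent to injectivity in $D(X)$, since the perverse $t$-structure identifies $\mathrm{Hom}_{Perv(X)}(N,M)$ with $\mathrm{Hom}_{D(X)}(N,M)$ between perverse objects) forces ${}^p\!H^{-1}(L) = 0$. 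Consequently $L$ is concentrated in perverse degree $0$, hence perverse, and ${}^p\!H^0(L) \simeq \mathrm{coker}(N \to M)$, which is exactly the short exact sequence asserted.

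The whole argument is formal $t$-structure manipulation, so no step really constitutes an obstacle; the only ingredient not supplied by the excerpt is the existence of the long exact sequence of perverse cohomology, which is the standard output of any $t$-structure and is taken for granted throughout BBD.
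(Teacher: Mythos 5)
Your argument is correct and is essentially the paper's own proof in different packaging: the four-term exact sequence $0 \to {}^p\!H^{-1}(L) \to N \to M \to {}^p\!H^{0}(L) \to 0$ that you extract from the long exact sequence of perverse cohomology is exactly the description of kernel and cokernel of a morphism of perverse sheaves via truncations of the cone ($\ker = ({}^p\tau_{\leq -1}L)[-1]$, $\mathrm{coker} = {}^p\tau_{\geq 0}L$) which the paper invokes from BBD (Prop.~1.2.2 and Thm.~1.3.6). The only blemish is your parenthetical about injectivity ``in $D(X)$'': monomorphism is not a useful notion in a triangulated category and the remark is unnecessary, since exactness of your sequence at $N$ already identifies ${}^p\!H^{-1}(L)$ with the kernel of $N\to M$ in $Perv(X)$, which vanishes by hypothesis.
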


\begin{proof}
This follows from the description of kernels and cokernels in \\
$Perv(X)$, in terms of the t-structure of $Perv(X)=D^{\geq 0}\cap D^{\leq 0}$ (see Prop. 1.2.2 and Thm. 1.3.6 in \cite{BBD}). (i) The kernel of the map $\alpha: N\rightarrow M$ is given by 
$(\tau_{\leq -1}L)[-1]$, which is $0$ since $L\in D^{\geq 0}$. The cokernel of $\alpha $ is $\tau_{\geq 0}L=L$, and this identity also gives (ii). This proves the lemma. 
\end{proof}

\begin{defn}[\cite{KS}, Ex.8.20]
Let $\mathcal C$ be an abelian category. An object $M$ in $\mathcal C$ is irreducible (or simple) if it is not isomorphic to $0$ and any subobject of $M$ is either $M$ or $0$.
A sequence
$$M=M_0 \supset M_1 \supset \ldots \supset M_{n-1} \supset M_n = 0$$
is a decomposition series if the quotient $M_i / M_{i+1}$ is irreducible for all $i$ with $0 \leq i <n$.
\end{defn}

Every perverse sheaf has a finite decomposition series whose successive quotients are irreducible perverse sheaves (\cite{BBD}).

We call the integer $n$ the length of the object $X$. We will denote the number of factors in a decomposition series of an object $M$ by $c(M)$ and so $c(M)=n$. 
In the situation of Lemma \ref{lem:critexactseq}
$$c(M)=c(N) + c(L).$$

\vspace{0.2cm}

\section{Mayer-Vietoris}
\label{sec:MV}

We will use the idea of deletion and restriction, to set up an inductive description of $Rj_{*}\C_{\tilde U}[n]$ using Mayer-Vietoris sequences. 
Define the subarrangements $\A':= \A - H_0$ in $\C^n$ and $$\A'':= \{ H_0 \cap H \mid H \in \A'\ \mbox{and} \ H_0 \cap H \neq H_0 \},$$
the restriction of $\A$ to $H_0$.
Consider the following sets
$$U=  \C^n\setminus H_0 \quad , \quad V= \C^n \setminus \cup_{i=1}^m H_i$$

$$\tilde U:=U \cap V = \C^n \setminus \cup_{i=0}^m H_i \quad , \quad U \cup V = \C^n \setminus \left( H_0 \cap (\cup_{i=1}^m H_i) \right)$$
and inclusions:
$$j_Y: Y \hookrightarrow \C^n \quad , \quad i_Y: \C^n \setminus Y \hookrightarrow \C^n$$
where $Y$ denotes one of the four sets $U, V, U \cap V,$ and $ U \cup V$. Note that they all have dimension $n$, and hence $\C[n]$ is a perverse sheaf in the corresponding derived category.

Consider in $D(\C^n)$ the distinguished triangle corresponding to the Mayer-Vietoris sequence associated to $$U \cap V = \C^n\setminus \cup_{i=0}^m H_i $$ (see \cite{KaS}, pp.94,114).

\begin{equation}
 \label{MVR}
  Rj_{U \cup V*}j_{U \cup V}^* \C[n] \rightarrow Rj_{U*}j_U^*\C[n] \oplus Rj_{V*}j_V^* \C[n] \rightarrow Rj_{U \cap V*}j_{U \cap V}^* \C[n] \rightarrow 
\end{equation}

When the inclusion $j$ is an affine and quasi-finite morphism, $Rj_*$ is a t-exact functor. In the sequence above, while $j_{U}$, $j_{V}$, $j_{U \cap V}$ are affine and quasi-finite, 
$j_{U \cup V}$ is not, therefore (\ref{MVR}) is not an exact sequence of perverse sheaves.  However, we have some canonical irreducible subobjects.

\vspace{0.2cm}

\begin{lem}
\label{injg}
The adjunction morphism $g_Y: \C[n] \rightarrow Rj_{Y*}j_Y^*\C[n]$ is an injective morphism in $Perv(\C^n)$, where $Y$ is any of the three affine subsets $U, V$ and $U \cap V$ of $\C^n$.
\end{lem}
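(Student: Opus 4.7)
My plan is to deduce injectivity from the simplicity of $\C[n]$ together with the nonvanishing of $g_Y$, rather than by a direct computation of kernels. Three ingredients are needed. First, for $Y\in\{U,V,U\cap V\}$ the complement $\C^n\setminus Y$ is a hypersurface (one hyperplane, respectively a union of hyperplanes), so $j_Y$ is an affine open embedding; as recalled in the paragraph preceding the lemma, $Rj_{Y*}$ is then $t$-exact, and $Rj_{Y*}j_Y^*\C[n]$ is a genuine perverse sheaf. This places $g_Y$ in the abelian category $Perv(\C^n)$ and makes its kernel meaningful.

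Second, $\C_{\C^n}[n]$ is a \emph{simple} object of $Perv(\C^n)$: since $\C^n$ is smooth and irreducible, $\C[n]$ equals the intermediate extension $IC(\C^n,\C)$ of the constant local system from the smooth locus (which is all of $\C^n$), and such $IC$-sheaves are exactly the simple objects of $Perv$. Third, $g_Y$ is nonzero: for an open immersion $j_Y$ the counit $j_Y^*Rj_{Y*}\to\mathrm{id}$ is an isomorphism, so applying $j_Y^*$ to the adjunction unit $g_Y$ recovers the identity of $\C_Y[n]\ne 0$, which already forces $g_Y\ne 0$.

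With these three facts the argument closes itself: the kernel of $g_Y$, computed in $Perv(\C^n)$, is a subobject of the simple perverse sheaf $\C[n]$, hence is either $0$ or $\C[n]$; since $g_Y$ is nonzero it cannot be all of $\C[n]$, so the kernel is $0$ and $g_Y$ is a monomorphism in $Perv(\C^n)$. I do not anticipate a serious obstacle here: the bookkeeping in the first step is what ensures that kernels make sense in an abelian category, after which the simplicity of $\C[n]$ does all the work and the rest is purely formal. The only alternative one might consider is to analyze the attaching triangle $i_{Z*}i_Z^!\C[n]\to\C[n]\to Rj_{Y*}j_Y^*\C[n]\to$ and argue that ${}^pH^0(i_{Z*}i_Z^!\C[n])=0$ directly, but for $Y=V$ or $U\cap V$ the closed set $Z$ is singular, making that route noticeably more delicate than the simplicity argument above.
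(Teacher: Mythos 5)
Your proposal is correct and follows essentially the same route as the paper: both arguments rest on the irreducibility of $\C[n]$ in $Perv(\C^n)$ together with the observation that applying $j_Y^*$ to the adjunction unit yields the identity on $j_Y^*\C[n]$, so $g_Y\neq 0$ and its kernel must vanish. Your explicit remarks that $Rj_{Y*}$ is $t$-exact for the affine quasi-finite $j_Y$ (so the kernel is computed in the abelian category) and that $\C[n]=IC(\C^n,\C)$ is simple only make precise what the paper states in the surrounding text.
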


\begin{proof}
Since $\C[n]$ is an irreducible perverse sheaf, either $ker(g_Y)=0$ or $ker(g_Y)=\C[n]$, implying that the morphism $g_Y$ is either injective or zero, respectively.
Suppose $ker(g_Y)=\C[n]$ so that $g_Y=0$. Then
$$j^*_Y(g_Y) :j_Y^*\C[n] \rightarrow j_Y^*Rj_{Y*}j_Y^*\C[n],$$
is also zero.
But $j_Y^*Rj_{Y*}j_Y^*\C[n] = j_Y^*\C[n]$, and $j^*_Y(g_Y)$ is the identity morphism. 
We conclude that our initial assumption was not correct, therefore $ker(g_Y)=0$ and $g_Y$ is injective.
\end{proof}

If $j_Y: \ Y \to \C^n$ is one of our open inclusions, we let $i_Y:\ \C^n\setminus Y \to \C^n$ be the corresponding closed inclusion. Recall (\cite{BBD}, pp.51) that there is a distinguished triangle
\begin{equation}
 \label{iFj}
   {}i_*{}i^!\mathcal F \rightarrow \mathcal F \rightarrow {}j_*{}j^* \mathcal F\rightarrow
\end{equation}
(irrespective of whether these complexes are perverse).

Lemma \ref{lem:critexactseq} now implies that for $Y$ as in the preceding lemma 
\begin{equation}
\label{eq:loccoh}
{}i_{Y*}i^!_Y \mathcal \C[n+1] = Rj_{Y*}j^*_Y \C[n] / \C[n],
\end{equation}
is in $Perv(\C^n)$. 

\vspace{0.2cm}

Let $\tilde{i}_Y := i_{Y^*}i^!_Y$ and $\tilde{j}_Y:=Rj_{Y*}j^*_Y$. From \eqref{MVR} and \eqref{iFj} we have in $D(\C^n)$ the following diagram:

\begin{equation}
\xymatrix{
\tilde{i}_{U}\C[n+1] \oplus \tilde{i}_{V}\C[n+1] \ar[r] & \tilde{i}_{U \cap V }\C[n+1] \ar[r] & \tilde{i}_{U \cup V }\C[n+2] \\
\tilde j_{U}\C[n] \oplus \tilde j_{V}\C[n] \ar[r] \ar[u] & \tilde j_{U \cap V}\C[n] \ar[r] \ar[u] & \tilde j_{U \cup V}\C[n+1] \ar[u] \\
\C[n] \oplus \C[n] \ar[u] \ar[r] &  \C[n] \ar[u] \ar[r] & \C[n+1] \ar[u]
}
\end{equation}

All vertical and horizontal sequences are exact triangles.
The lower vertical maps are the adjunction morphisms, and the remaining vertical sequences are applications of \eqref{iFj}.

We will now show that the first line of this diagram is an exact sequence in $Perv(\C^n)$.
\begin{prop}
\label{prop:seq2}
\begin{equation}
\label{eq:prop}
i_{U*}i^!_U\C[n+1] \oplus i_{V*}i^!_V\C[n+1] \rightarrow  i_{U \cap V *}i^!_{U \cap V}\C[n+1] \rightarrow i_{U \cup V *}i^!_{ U \cup V}\C[n+2]
\end{equation}
is a short exact sequence of perverse sheaves. 
\end{prop}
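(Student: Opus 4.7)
The plan is to apply Lemma \ref{lem:critexactseq}(i) to the top row of the $3\times 3$-diagram preceding the proposition. First I would observe that this row is a distinguished triangle in $D(\C^n)$: each column is an instance of \eqref{iFj}, the lower two rows are distinguished triangles (the bottom trivially, and the middle being the rotation of \eqref{MVR}), and so the top row is distinguished by the octahedral axiom (the $3\times 3$-lemma in a triangulated category) applied to the morphism from the bottom to the middle row. Equivalently, the top row is the Mayer--Vietoris triangle for local cohomology with supports in the closed sets $\C^n\setminus U=H_0$ and $\C^n\setminus V=\bigcup_{i=1}^{m}H_i$. Since \eqref{eq:loccoh} tells us the first two terms $\tilde i_U\C[n+1]$ and $\tilde i_V\C[n+1]$ are perverse, Lemma \ref{lem:critexactseq}(i) reduces the proof to checking that $\tilde i_{U\cup V}\C[n+2]$ is perverse as well.

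To this end I would set $Z:=\C^n\setminus(U\cup V)=H_0\cap\bigl(\bigcup_{i=1}^{m}H_i\bigr)$ and factor the closed inclusion $i_{U\cup V}:Z\hookrightarrow\C^n$ as $k\circ l$, where $k:H_0\hookrightarrow\C^n$ is the smooth codimension-one embedding and $l:Z\hookrightarrow H_0$ the remaining closed embedding. Purity for $k$ gives $k^!\C[n+2]=\C_{H_0}[n]$, so
\[
\tilde i_{U\cup V}\C[n+2]=k_*\,l_*l^!\,\C_{H_0}[n]=k_*\,R\Gamma^{H_0}_{Z}\C_{H_0}[n].
\]
Inside $H_0\cong\C^{n-1}$ the set $Z$ is precisely the union of the hyperplanes of the restricted arrangement $\A''$, so $H_0\setminus Z$ is the complement of an affine hyperplane arrangement in $H_0$ and the open inclusion $j':H_0\setminus Z\to H_0$ is affine and quasi-finite. (If $Z=\emptyset$ then $\tilde i_{U\cup V}\C[n+2]=0$ and there is nothing to prove.)

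I would then run the very argument that produced \eqref{eq:loccoh}, one dimension lower. Lemma \ref{injg} goes through with $\C_{H_0}[n-1]$ in place of $\C[n]$---its proof uses only irreducibility of the constant sheaf and the affine quasi-finiteness of the inclusion---so $\C_{H_0}[n-1]\hookrightarrow Rj'_*\C_{H_0\setminus Z}[n-1]$ is an injection in $Perv(H_0)$; Lemma \ref{lem:critexactseq}(ii) then identifies its cokernel with $R\Gamma^{H_0}_{Z}\C_{H_0}[n]$ and shows this is perverse on $H_0$. Since $k_*$ is t-exact for the perverse t-structure, $\tilde i_{U\cup V}\C[n+2]=k_*R\Gamma^{H_0}_{Z}\C_{H_0}[n]$ is perverse on $\C^n$, and Lemma \ref{lem:critexactseq}(i) promotes the top row of the diagram to the desired short exact sequence \eqref{eq:prop}.

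The only place that needs real care is the shift bookkeeping in $i_{U\cup V}^!\C[n+2]=l^!k^!\C[n+2]=l^!\C_{H_0}[n]$; once this is confirmed, perversity on $H_0$ is a direct appeal to the analogue of \eqref{eq:loccoh} applied to the affine arrangement $\A''$ inside $H_0\cong\C^{n-1}$, and the proposition follows at once.
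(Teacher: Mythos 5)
Your proposal is correct and follows essentially the same route as the paper: reduce via Lemma \ref{lem:critexactseq}(i) to perversity of the three terms, handle the first two by \eqref{eq:loccoh}, and prove perversity of $i_{U\cup V*}i^!_{U\cup V}\C[n+2]$ by factoring the closed inclusion through $H_0$, using purity to get $\C_{H_0}[n]$, and rerunning the argument of Lemmas \ref{injg} and \ref{lem:critexactseq} one dimension lower before applying t-exactness of the pushforward from $H_0$. Your explicit justification that the top row of the diagram is a distinguished triangle, and your remark on the case $Z=\emptyset$, are minor additions of care rather than a different method.
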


\begin{proof} By Lemma \ref{lem:critexactseq} it suffices to prove that all terms are perverse sheaves.
 The objects in the first two terms are perverse sheaves, since they are quotients of perverse sheaves, by Lemmas \ref{lem:critexactseq} and  \ref{injg}.

The morphism $i_{U \cup V }$ can be decomposed as:
$$ i_{U \cup V }:H_0\cap (\cup_{i=1}^m H_i )\xrightarrow{i_1} H_0 \xrightarrow{i_2} \C^n$$
and consequently $i_{U \cup V *}i^!_{ U \cup V}\C[n+2]=i_{2*}i_{1*}i_1^!i_2^!\C[n+2].$

The real codimension of the linear subspace $H_0$ in $\C^n$ is $2$, and hence $i_2^!\C[n+2]=\C_{H_0}[n]$.

Let $j_{1}: H_0 - \left( H_0 \cap (\cup_{i=1}^m H_i) \right) \rightarrow H_0$ be the open inclusion corresponding to $i_{1}$. 
We have a distinguished triangle
$$\C_{H_0}[n-1] \rightarrow Rj_{1*}j^*_1\C_{H_0}[n-1] \rightarrow i_{1*}i_1^!\C_{H_0}[n]\rightarrow$$
Since $j_{1}$ is affine and quasi-finite and so ${j}^*_1$ and $j_{1*}$ are t-exact and 
$\C_{H_0}[n-1]$ is perverse, the two first sheaves in this sequence are perverse. By Lemma \ref{injg} the first map is an injection and hence 
by Lemma \ref{lem:critexactseq} $i_{1*}i_1^!\C_{H_0}[n]$ is a perverse sheaf in $D(H_0)$, as well as the cokernel of the injection.

Recalling that the direct image $i_{2*}$ is t-exact, we reach the conclusion that $i_{2*}(i_{1*}i_1^! \C_{H_0}[n])=i_{2*}i_{1*}i_1^!i_2^!\C[n+2]$ is a perverse sheaf in $\C^n$. This finishes the proof of the theorem.
\end{proof}

\begin{cor} 
\label{cor:c}
The length $c$ of the decomposition series of the sheaves in the proposition satisfies
$$c(i_{U \cap V *}i^!_{U \cap V}\C[n+1])=1+ c(i_{V*}i^!_V\C[n+1])+c(i_{U \cup V *}i^!_{ U \cup V}\C[n+2]).$$
\end{cor}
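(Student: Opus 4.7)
The plan is to apply the length-additivity identity $c(M)=c(N)+c(L)$ recorded at the end of Section~2 to the short exact sequence of perverse sheaves established in Proposition~\ref{prop:seq2}. Combined with the fact that $c$ is additive on direct sums, this yields
$$c(i_{U \cap V *}i^!_{U \cap V}\C[n+1]) = c(i_{U*}i^!_U\C[n+1]) + c(i_{V*}i^!_V\C[n+1]) + c(i_{U \cup V *}i^!_{ U \cup V}\C[n+2]),$$
so the corollary is reduced to the single identity $c(i_{U*}i^!_U\C[n+1])=1$.

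To verify this, I would compute $i_U^!\C[n+1]$ explicitly. The closed complement of $U=\C^n\setminus H_0$ is the hyperplane $H_0$, of complex codimension $1$ (equivalently, real codimension $2$), so the same codimension argument used in the proof of Proposition~\ref{prop:seq2} for $i_2^!\C[n+2]=\C_{H_0}[n]$ gives $i_U^!\C[n+1]=\C_{H_0}[n-1]$. Consequently
$$i_{U*}i^!_U\C[n+1] \;=\; i_{U*}\C_{H_0}[n-1].$$

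The last step is to observe that since $H_0$ is a smooth, irreducible subvariety of $\C^n$ of complex dimension $n-1$, the shifted constant sheaf $\C_{H_0}[n-1]$ is the intersection cohomology complex of $H_0$, hence an irreducible perverse sheaf on $H_0$; pushing forward along the closed embedding $i_U$ preserves irreducibility in $Perv(\C^n)$, so its decomposition series has length $1$. There is essentially no obstacle, since all the perversity work has already been done in Proposition~\ref{prop:seq2}: the corollary is just length bookkeeping together with the remark that the $U$-term in the Mayer--Vietoris sequence is an IC sheaf. The only point demanding a little care is the index shift in $i_U^!$, but this is already handled by the codimension computation cited above.
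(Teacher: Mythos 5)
Your proof is correct and takes essentially the same route as the paper: both reduce the corollary to length additivity applied to the short exact sequence of Proposition~\ref{prop:seq2}, together with the identification $i_{U*}i^!_U\C[n+1]=i_{U*}\C_{H_0}[n-1]$ and its irreducibility, which the paper likewise deduces from the $t$-exactness of the pushforward along the closed embedding $H_0\hookrightarrow\C^n$. The only cosmetic difference is that you phrase the irreducibility via the IC-sheaf description of $\C_{H_0}[n-1]$ on the smooth hyperplane $H_0$, which is a fine equivalent justification.
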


\begin{proof}The complex $i_{U*}i^!_U\C[n+1]=i_{2*}\C_{H_0}[n-1]$ is irreducible (this follows easily from the fact that $i_{2*}$ is $t$-exact).
Hence this corollary is a direct consequence of the exact sequence in the proposition.
\end{proof}

\vspace{0.2cm}

\section{Length}  

Let $\A$ be an arrangement in $\C^n$, and $j:\tilde U\to \C^n$ the inclusion of the open complement of the hyperplanes. We will now see that \eqref{eq:prop} means that the number of decomposition factors behaves as the Poincar\'e polynomial of the set of flats. We follow \cite{OT}.
Let $L=L(\A)$ be the set of nonempty intersections of elements of $\A$, i.e. flats. Define a partial order on $L$ by
$$F \leq G \Longleftrightarrow G \subseteq F.$$

\begin{defn}
Let $F, G, K \in L$. The M\"obius function $\mu_{\A}: L \times L \rightarrow \Z$ is defined recursively as:
\[
\mu(F,G)= 
\left\{\begin{array}{l}
       1 \ \mbox{if} \ F=G        \\
       - \sum_{F \leq K \leq G}\mu(F,K) \ \mbox{if} \ F < G    \\
       0 \ \mbox{otherwise}
       \end{array}
       \right. \]
\end{defn}

For $F \in L$, we define $\mu(F)=\mu(\C^n,F)$.

\begin{defn}[\cite{OT}, Def.2.48]
Let $\A$ be an arrangement in $ \C^n$ with intersection poset $L$ and M\"obius function $\mu$. Let $t$ be an indeterminate. Define 
the Poincar\' e polynomial of $\A$ by
$$\Pi(\A,t)= \sum_{F \in L} \mu(F)(-t)^{codim \ F}$$
\end{defn}

\begin{thm} Let $\A$ be a hyperplane arrangement with hyperplanes $H_i, \ i=0,...,m$. Let $j:\tilde U:=\C^n\setminus \cup_{i=0}^m H_i\to \C^n$ be the inclusion of the complement to the arrangement, and $\C_{\tilde U}$ the constant sheaf on $\tilde U$. Then
\label{thm:main}
$$c(Rj_*\C_{\tilde U}[n])= \Pi(\A,1) = \sum_{F \in L(\A)} \lvert \mu(F) \rvert.$$ 
\end{thm}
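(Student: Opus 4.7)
The plan is to establish a deletion-restriction recursion for $\rho(\A):=c(Rj_*\C_{\tilde U}[n])$ and induct on the number $m+1$ of hyperplanes, uniformly over arrangements in every $\C^n$. The base case is the empty arrangement: $Rj_*\C_{\tilde U}[n]=\C[n]$ is irreducible, so $\rho(\emptyset)=1=\Pi(\emptyset,1)$. For the induction step I would reinterpret the three terms in Proposition~\ref{prop:seq2}: $U\cap V=\tilde U$ is the complement of $\A$, $V$ is the complement of the deleted arrangement $\A'$, and the complement of $U\cup V$ equals $\cup_{i\geq 1}(H_0\cap H_i)$, the union of the hyperplanes of the restriction arrangement $\A''$ inside $H_0\cong\C^{n-1}$. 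Applying \eqref{eq:loccoh} termwise then gives $c(i_{U\cap V*}i^!_{U\cap V}\C[n+1])=\rho(\A)-1$ and $c(i_{V*}i^!_V\C[n+1])=\rho(\A')-1$.

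The third term requires the factorization $i_{U\cup V}=i_2\circ i_1$ from Proposition~\ref{prop:seq2}. Because $i_2^!\C[n+2]=\C_{H_0}[n]$, we obtain
$$i_{U\cup V*}i^!_{U\cup V}\C[n+2]\;=\;i_{2*}\bigl(i_{1*}i_1^!\C_{H_0}[n]\bigr),$$
whose inner object is precisely the analogue on $H_0\cong\C^{n-1}$ of the local-cohomology quotient in \eqref{eq:loccoh}, now for the arrangement $\A''$; hence its length equals $\rho(\A'')-1$. Since $i_2$ is a closed immersion between smooth varieties, $i_{2*}$ is $t$-exact and sends simple perverse sheaves to simple perverse sheaves, so it preserves length. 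Substituting into Corollary~\ref{cor:c} yields the deletion-restriction identity
$$\rho(\A)-1 \;=\; 1+(\rho(\A')-1)+(\rho(\A'')-1),\qquad\text{i.e.}\qquad\rho(\A)=\rho(\A')+\rho(\A'').$$

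To finish I would invoke the classical deletion-restriction for the Poincar\'e polynomial, $\Pi(\A,t)=\Pi(\A',t)+t\cdot\Pi(\A'',t)$ (see \cite[Thm.~2.56]{OT}), which specializes at $t=1$ to $\Pi(\A,1)=\Pi(\A',1)+\Pi(\A'',1)$: the same recursion as for $\rho$. Induction on $m+1$ then gives $\rho(\A)=\Pi(\A,1)$. The second equality $\Pi(\A,1)=\sum_F|\mu(F)|$ is immediate from the standard sign property $(-1)^{\operatorname{codim} F}\mu(F)\geq 0$ of the M\"obius function. The main obstacle is the identification of the $U\cup V$ term with an inductive object attached to the smaller arrangement $\A''$ in the smaller ambient space $H_0$: it is the codimension-$2$ shift $i_2^!\C[n+2]=\C_{H_0}[n]$ that exactly converts the shifted constant sheaf on $\C^n$ into the shifted constant sheaf on $\C^{n-1}$ expected by \eqref{eq:loccoh} for $\A''$, and this dimensional reduction is what makes the induction run.
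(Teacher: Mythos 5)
Your proposal is correct and follows essentially the same route as the paper: deletion--restriction via the Mayer--Vietoris sequence of Proposition~\ref{prop:seq2} and Corollary~\ref{cor:c}, identification of the $U\cup V$ term with the local-cohomology object of $\A''$ on $H_0\cong\C^{n-1}$ through the factorization $i_{U\cup V}=i_2\circ i_1$, and comparison with the recursion $\Pi(\A,t)=\Pi(\A',t)+t\Pi(\A'',t)$ at $t=1$. The only cosmetic differences are that you start the induction at the empty arrangement (the paper starts at a single hyperplane, avoiding the degenerate case $V=U\cup V=\C^n$, which nevertheless works) and that you spell out explicitly that $i_{2*}$ preserves length, which the paper uses implicitly via $t$-exactness.
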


\begin{proof} We use the notation of the previous section. Note that $\tilde U=U\cap V$. By Lemma \ref{injg} and \eqref{eq:loccoh} the theorem is equivalent to showing that 
$$c(i_{U \cap V *}i^!_{U \cap V}\C[n+1])= \Pi(\A,1)-1.$$
We make induction on the number of hyperplanes in $\A$.
If $H_0$ is the only element of $\A$, then $i_{U \cap V*}i^!_{U \cap V} \C[n+1]= i_{U*}i^!_U \C[n+1]$ is an irreducible object. Therefore the equality holds if $\A= \{ H_0\}$, given that clearly $\Pi(\A,1)=2$.
Let $\A'$ and $\A''$ be as described before (section \ref{sec:MV}). By the induction hypothesis, we have
$$c(i_{U*}i^!_U\C[n+1] \oplus i_{V*}i^!_V\C[n+1])= 1+ \Pi(\A',1)-1=\Pi(\A',1)$$
$$c(i_{U \cup V *}i^!_{ U \cup V}\C[n+1])= \Pi(\A'',1)-1$$
Hence according to Corollary \ref{cor:c}
$$c(i_{U \cap V *}i^!_{U \cap V}\C[n+1]) = \Pi(\A',1)+ \Pi(\A'',1)-1$$
Since $(\A, \A', \A'')$ form a triple of arrangements, we can use Theorem 2.56 of \cite{OT},  that states
$$\Pi(\A,t)= \Pi(\A',t)+ t\Pi(\A'',t)$$
concluding that
$$c(i_{U \cap V *}i^!_{U \cap V}\C[n+1])= \Pi(\A,1)-1.$$
By Lemma \ref{injg}, this proves the first equality of the theorem. The second, which says that the Poincar\'e polynomial has positive coefficients, follows from \cite[Thm 2.47] {OT}.
\end{proof}

\begin{example}
Consider the braid arrangement $\mathcal B_n$ in $\C^n$ consisting in $\dbinom{n}{2}$  hyperplanes $B_{ij}$ such that
$$B_{ij}= \{x \in \C^n \mid x_i = x_j, \ 1 \leq i < j \leq n \}$$
The Poincar\'e polynomial for $\mathcal B_n$ is given by (see \cite{OT}, Prop. 2.54) 
$$\Pi(\mathcal B_n, t)=(1+t)(1+2t) \ldots (1+(n-1)t).$$
So the length of $Rj_{*}\C_{\tilde U}[n]$ is
$$c(Rj_*\C_{\tilde U}[n]) = \Pi(\mathcal B_n, 1) =(1+1)(1+2) \ldots (n)= n!$$
\end{example}

\section{Decomposition factors}

We can refine the computation of the length, so as to describe the decomposition factors. They will be of the following form.
Let $F$ be a flat and $i:F\to \C^n=X$ the inclusion. We associate to $F$ the irreducible perverse sheaf
$$N_F=i_*\C_F[\dim_\C F]\in Perv(X).$$ 
Since $i_*$ is t-exact it is clear that $N_F$ is perverse, and the exactness also implies the irreducibility of $N_F$.
For example $N_X=\C_X[n]$. 
Since $F$ is a subvector space, the local cohomology $i_*i^!\C_X$ has just has one non-zero cohomology group, equal to $\C$ and placed  in the real codimension $2n-2\dim_\C F$ of $F$(see \cite{Iv}).
Thus $i_*i^!\C_X=i_*\C_F[-(2n-2\dim_\C F)]$ and so
$N_F= i_*i^!\C_X[2n-\dim_\C F]\in D(X)$. For example, if $F$ is a hyperplane $N_F=i_*\C_F[n-1]= i_*i^!\C_X[n+1]$.

We will describe the decomposition series of $Rj_*\C_{\tilde U}[n]$, as an element in the Grothendieck group $G(\A)$ of $Perv(X)$. Recall that this is the free abelian group on symbols $[K]$, one for each perverse sheaf 
$K$ in $Perv(X)$, modulo the relations $[M]+[L]=[N]$ for each short exact sequence $N \to M \to L$. Clearly $G(\A)$ is a free abelian group with a basis corresponding to the set of irreducible perverse sheaves.
\begin{prop}
Let $j: \tilde U \rightarrow \C^n = X$ be the inclusion of the complement of the hyperplane arrangement.
 \begin{equation}
 \label{eq:groth}
[Rj_*\C_{\tilde U}[n]]=\sum_{F\in {L(\mathcal A)}}\vert \mu(F)\vert [N_F],
\end{equation}

where $\mu(F)$ is the M\"obius function on the intersection lattice $L(\mathcal A)$.
\end{prop}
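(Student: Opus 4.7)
The plan is to mimic the inductive argument of Theorem~\ref{thm:main} but now track classes in the Grothendieck group $G(\A)$ rather than only lengths, inducting on the number $m+1$ of hyperplanes. For the base case $\A = \{H_0\}$, the short exact sequence $0 \to \C[n] \to Rj_*\C_{\tilde U}[n] \to i_{U*}i^!_U\C[n+1] \to 0$ combined with the identification $i_{U*}i^!_U\C[n+1] = i_{2*}\C_{H_0}[n-1] = N_{H_0}$ gives $[Rj_*\C_{\tilde U}[n]] = [N_X] + [N_{H_0}]$, which matches \eqref{eq:groth} since $|\mu(X)| = |\mu(H_0)| = 1$.

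For the inductive step, Proposition~\ref{prop:seq2} together with the perverse exact sequence extracted from the adjunction triangle for $j_{U \cap V}$ yields in $G(\A)$ the identity
\[
[Rj_*\C_{\tilde U}[n]] = [N_X] + [i_{U*}i^!_U\C[n+1]] + [i_{V*}i^!_V\C[n+1]] + [i_{U \cup V*}i^!_{U \cup V}\C[n+2]].
\]
I would then identify each local-cohomology term. The first equals $[N_{H_0}]$. The inductive hypothesis applied to $\A'$ (whose complement is $V$) gives $[i_{V*}i^!_V\C[n+1]] = \sum_{F \in L(\A'),\, F \neq X} |\mu_{\A'}(F)| [N_F]$. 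Finally, the decomposition $i_{U \cup V*}i^!_{U \cup V}\C[n+2] = i_{2*}(i_{1*}i_1^!\C_{H_0}[n])$ from the proof of Proposition~\ref{prop:seq2}, combined with the inductive hypothesis applied to $\A''$ in $H_0$ and the t-exactness of $i_{2*}$ (which sends the analog $N_F^{H_0} \in Perv(H_0)$ of $N_F$ to $N_F$), gives $[i_{U \cup V*}i^!_{U \cup V}\C[n+2]] = \sum_{F \in L(\A''),\, F \neq H_0} |\mu_{\A''}(F)| [N_F]$.

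Combining these produces a sum indexed by flats in $L(\A') \cup L(\A'')$, and the remaining task is to match it with $\sum_{F \in L(\A)} |\mu_\A(F)| [N_F]$ coefficient by coefficient. This reduces to the flat-by-flat M\"obius identities: $|\mu_\A(F)| = |\mu_{\A'}(F)|$ when $F \in L(\A')$ is not contained in $H_0$; $|\mu_\A(F)| = |\mu_{\A''}(F)|$ when $F \subseteq H_0$ and $F \notin L(\A')$; and $|\mu_\A(F)| = |\mu_{\A'}(F)| + |\mu_{\A''}(F)|$ when $F \in L(\A')$ with $F \subsetneq H_0$. The main obstacle is exactly this refinement: Theorem~\ref{thm:main} used only the aggregate equality $\Pi(\A,1) = \Pi(\A',1) + \Pi(\A'',1)$, whereas here a coefficient-wise version is required. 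These identities are standard consequences of the signed deletion-restriction relation $\mu_\A(F) = \mu_{\A'}(F) - \mu_{\A''}(F)$ (a precursor of the triple formula \cite[Thm~2.56]{OT}) together with the fact that $\mu_\A(F)$ has sign $(-1)^{\mathrm{codim}\, F}$ in a geometric lattice.
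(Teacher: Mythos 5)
Your argument is correct, and its skeleton is the same as the paper's: induct on the number of hyperplanes, use the adjunction triangle together with the exact sequence \eqref{eq:prop} of Proposition~\ref{prop:seq2} to write $[Rj_*\C_{\tilde U}[n]]=[N_X]+[M_U]+[M_V]+[M_{U\cup V}]$ in the Grothendieck group, identify $[M_U]=[N_{H_0}]$, feed the inductive hypothesis for $\A'$ into $[M_V]$ and (via $i_{2*}$ and the factorization $i_{U\cup V*}i^!_{U\cup V}\C[n+2]=i_{2*}i_{1*}i_1^!\C_{H_0}[n]$) the hypothesis for $\A''$ into $[M_{U\cup V}]$, and reduce everything to the flat-by-flat M\"obius identity \eqref{eq:mobius}. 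The genuine difference is how that identity is established. The paper imports it from Jewell's topological deletion--restriction machinery (\cite[Thm.~2.5, Thm.~3.3, Prop.~3.4--6]{J}), i.e.\ from ranks in a long exact sequence that splits for hyperplane arrangements; you instead derive it purely combinatorially, from the signed local relation $\mu_{\A}(F)=\mu_{\A'}(F)-\mu_{\A''}(F)$ (with the convention that absent flats contribute $0$) together with the sign-alternation of $\mu$ in a geometric lattice. Your claims there are correct and standard: the cleanest justification is to apply \cite[Thm.~2.56]{OT} to the localized arrangement $\A_F$ and compare the coefficients of $t^{\operatorname{codim} F}$, which yields exactly $\vert\mu_{\A}(F)\vert=\vert\mu_{\A'}(F)\vert+\vert\mu_{\A''}(F)\vert$; the sign pattern is \cite[Thm.~2.47]{OT}. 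What your route buys is self-containedness and elementarity --- only lattice combinatorics from \cite{OT} is needed, in line with the paper's stated aim --- whereas the paper's appeal to \cite{J} imports a nontrivial external topological result (though it also explains the identity geometrically). You are also more explicit than the paper about transporting the inductive decomposition for $\A''$ from $Perv(H_0)$ to $Perv(\C^n)$ by the t-exact $i_{2*}$, and about the need to treat the coefficients of $N_X$ and $N_{H_0}$ separately; these points are implicit but unproblematic in the paper's version.
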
 

\begin{proof}We first note that all composition factors are of the form $[N_F]$, where $F$ is a flat. This follows by induction on the number of hyperplanes of the arrangement from the sequence \eqref{eq:prop}, as in the proof of 
Theorem \ref{thm:main}. The base cases are $\C_X[n]=N_X$ and $N_F=i_*\C_F[n-1]= i_*i^!\C_X[n+1]$, if $F$ is a hyperplane.

The formula may also be proved by induction on the number of hyperplanes in a similar way. 
Translate the right hand side of \eqref{eq:groth} and define
$$Q(\A,1)=\sum_{F\in{L(\mathcal A)}}\vert \mu(F)\vert [N_F] - [N_{\C^n}]$$
($Q(\A,1)$ is an evaluation of an evident Poincar\'e polynomial in $G(\A)$.)

Then use the notation of Proposition \ref{prop:seq2}. Set 
$M_Y:=i_{Y*}i^!_Y\C[n+1] $ if $Y=U,V,U \cap V,U\cup V$, and note that the proposition (for $U\cap V\to \C^n$) is equivalent to proving 
$[M_{U\cap V}]=Q(\A,1)$. Note that $[M_U]=[N_{H_0}]$.

Clearly \eqref{eq:prop} implies that
$$
[M_U]+ [M_V] +[M_{U\cup V}[1]]=[M_{U\cap V}].
$$
By induction we assume that the formula is true for $\A'$ and $\A''$ and hence the result will follow if we know that
$$
[N_{H_0}]+Q(\A',1)+Q(\A'',1)=Q(\A,1).
$$
This amounts to the following property of the M\"obius function of a hyperplane arrangement:
\begin{equation}
\label{eq:mobius}
\vert \mu_{\A}(F)\vert=\vert \mu_{\A'}(F)\vert  +\vert \mu_{\A''}(F)\vert
\end{equation}
using the notation of section \ref{sec:MV}. 
In Theorem 2.5 of \cite{J}, which makes a careful study of the effect of insertion-restriction on the cohomology of the complement, there is a long exact sequence that splits into short exact sequences in the case 
of hyperplane arrangements (see Prop.3.4-6(ibid.)). The ranks of the modules in that sequence are the M\"obius functions in \eqref{eq:mobius} (by Thm 3.3(ibid.)), hence we get in particular the desired result.  This 
finishes the proof.
\end{proof}
\vspace{1cm}

\subsection{Decomposition Factors of $Rj_!\C_{\tilde U}[n]$}

By Verdier duality there is also a result for the dual to $Rj_*\C_{\tilde U}[n]$, which we now describe. It uses the sheaves $D_X(N_F)= N_F$. In fact we have,
\begin{equation}
\begin{split}
D(N_F) & =D_X(i_* i^*\C_X[dim_{\C}F])= i_*D_F(i^*\C_X[dim_{\C} F])=\\ 
 & = i_*i^!(D_X\C_X[dim_{\C} F])=i_*i^!(D_X\C_X)[-dim_{\C} F]=\\
 & = i_*i^!\C_X[2n-dim_{\C} F]=i_*\C_F [dim_{\C} F] = N_F.
\end{split}
\end{equation}

(Recall that $i_*=i_!$.)
Since, as mentioned before, $i^!\C_X = \C_F[-(2n -2dim_{\C}F)]$.

\begin{cor}
$$[Rj_!\C_{\tilde U}[n]]= \sum_{F \in L(\A)}\lvert \mu(F)\rvert[N_F].$$  
\end{cor}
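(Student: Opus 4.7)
The plan is to deduce this corollary from the previous proposition by applying Verdier duality $D = D_X$ on the Grothendieck group $G(\A)$, using the already-computed self-duality $D(N_F) = N_F$.

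First I would observe that Verdier duality induces a well-defined $\Z$-linear involution on $G(\A)$. Indeed, $D$ is a contravariant t-exact equivalence on $Perv(X)$, so it sends irreducibles to irreducibles and short exact sequences to short exact sequences, hence descends to the Grothendieck group.

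Second, I would identify $D([Rj_*\C_{\tilde U}[n]])$ with $[Rj_!\C_{\tilde U}[n]]$. On the smooth $n$-dimensional open $\tilde U$, the dualizing complex is $\C_{\tilde U}[2n]$, so $D_{\tilde U}(\C_{\tilde U}[n]) = \C_{\tilde U}[n]$, i.e.\ the shifted constant sheaf is self-dual. Combined with the standard exchange $D_X \circ Rj_* = j_! \circ D_{\tilde U}$ (recall $j_! = Rj_!$ for the open inclusion $j$), this gives
\begin{equation*}
D_X\bigl(Rj_*\C_{\tilde U}[n]\bigr) \;=\; j_!\,D_{\tilde U}\bigl(\C_{\tilde U}[n]\bigr) \;=\; Rj_!\C_{\tilde U}[n].
\end{equation*}

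Finally, I would apply $D$ to the identity
\begin{equation*}
[Rj_*\C_{\tilde U}[n]] \;=\; \sum_{F\in L(\A)} \lvert \mu(F)\rvert\, [N_F]
\end{equation*}
from the previous proposition. Using the self-duality $D(N_F)=N_F$ already verified in the excerpt, the right side is fixed by $D$ term by term, while by the previous step the left side becomes $[Rj_!\C_{\tilde U}[n]]$. This yields the claimed formula. The only point requiring a little care is verifying that the compatibility $D_X Rj_* = j_! D_{\tilde U}$ applies in the constructible derived category relevant here, but this is a standard property of the six-functor formalism and poses no real obstacle.
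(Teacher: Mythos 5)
Your proposal is correct and follows essentially the same route as the paper: Verdier duality exchanging $Rj_*$ and $Rj_!$ (using self-duality of $\C_{\tilde U}[n]$), the self-duality $D_X(N_F)=N_F$, and the fact that duality descends to the Grothendieck group (the paper isolates this as a separate lemma proved by induction on the decomposition series). The only cosmetic difference is that you dualize $Rj_*\C_{\tilde U}[n]$ to produce $Rj_!\C_{\tilde U}[n]$, whereas the paper dualizes in the opposite direction; the argument is the same.
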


In the following lemma $M$ is taken as an element in the Grothendieck group $G(\A)$ of perverse sheaves in $\C^n$.

\begin{lem}
Let $M$ be a perverse sheaf in $\C^n$ and $K_i$, $i \in I$, denote the factors of its decomposition series, with symbols $[K_i]$. If
$[M]= \sum_i[K_i]$, then $[D_XM]= \sum_i [D_XK_i]$.
\end{lem}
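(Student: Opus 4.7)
The plan is to view this as a formal consequence of the exactness of Verdier duality on $Perv(X)$. Concretely, Verdier duality $D_X$ is a contravariant equivalence of $D(X)$ with itself which, in terms of the perverse $t$-structure, interchanges $D^{\geq 0}$ and $D^{\leq 0}$. Hence it restricts to a contravariant exact equivalence $D_X : Perv(X) \to Perv(X)^{op}$. I would first recall this fact (as stated in \cite{BBD}); the only point to check is that this exactness also implies $D_X$ sends irreducible perverse sheaves to irreducible perverse sheaves, which is immediate since an equivalence of abelian categories preserves kernels and cokernels, and hence simple objects.

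Having this, the proof is an induction on the length of a decomposition series. Fix the filtration $M=M_0 \supset M_1 \supset \cdots \supset M_n = 0$ with simple successive quotients $K_i = M_i/M_{i+1}$. For each $i$ there is a short exact sequence in $Perv(X)$
\begin{equation*}
0 \to M_{i+1} \to M_i \to K_i \to 0.
\end{equation*}
Applying the exact contravariant functor $D_X$ gives a short exact sequence
\begin{equation*}
0 \to D_X K_i \to D_X M_i \to D_X M_{i+1} \to 0
\end{equation*}
in $Perv(X)$, which by the defining relations of $G(\A)$ yields $[D_X M_i] = [D_X K_i] + [D_X M_{i+1}]$.

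Iterating from $i=0$ down to $i = n-1$, and using that $D_X M_n = 0$, I obtain $[D_X M] = \sum_{i=0}^{n-1} [D_X K_i]$, which is exactly the claim. No step here is really an obstacle; the substantive content is the exactness of $D_X$ on $Perv(X)$, which is the reason $D_X$ descends to a well-defined additive involution on $G(\A)$, and everything else is bookkeeping. (Alternatively, one can phrase the argument entirely on the level of Grothendieck groups: any exact contravariant functor between abelian categories with finite length objects induces a group homomorphism between their Grothendieck groups, and applying this homomorphism to the identity $[M]=\sum_i [K_i]$ gives the desired formula.)
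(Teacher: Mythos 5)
Your argument is correct and is essentially the paper's own: both proofs rest on the exactness of Verdier duality on $Perv(X)$, dualize the short exact sequences coming from the decomposition series, and conclude by induction (or iteration) on the number of factors. Your write-up just makes the exactness and simplicity-preservation of $D_X$ more explicit than the paper does.
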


\begin{proof}
Suppose that $M$ has only two decomposition factors, such that $0 \rightarrow K_1 \rightarrow M \rightarrow K_2 \rightarrow 0$ is an exact sequence. Then, the dualized sequence,
$$0 \rightarrow D_XK_2 \rightarrow D_XM \rightarrow D_XK_1 \rightarrow 0$$
is also an exact sequence.
Obviously, $[M]=[K_1]+[K_2]$, implies $[D_XM]=[D_XK_1]+[D_XK_2].$
By induction in the number of decomposition factors we obtain the desired result.
\end{proof}

\begin{proof}(of the Corollary)
Let $F$ be a flat, $U= X \setminus F = \C^n \setminus F$ and consider $i: F \rightarrow \C^n$.
First note that 
\begin{equation}
\begin{split}
D_X(\C_X[dim_{\C} X]) & = (D_X\C_X)[-dim_{\C}X]= \C_X[2dim_{\C} X -dim_{\C}X] = \\
& =\C_X[dim_{\C} X].
\end{split}
\end{equation}

From Verdier duality, we have 
\begin{equation}
\label{Rdual}
D_U(Rj_!\C_{\tilde U}\C[n])= Rj_*(D_{X}\C_{\tilde U}[n]) = Rj_*(D_{X}\C_{\tilde U})[-n]=Rj_*\C_{\tilde U}[n].
\end{equation}

Hence, by \eqref{Rdual} and the relation in Proposition 5.1, we have that
$$[Rj_!\C_{\tilde U}[n]]= \sum_{F \in L(\A)}\lvert \mu(F)\rvert[N_F].$$

\end{proof}

%A similar result holds for the socle series of $Rj_*\C_U$ (for the definition see the introduction).
%\begin{prop} $$
%[soc^k(Rj_*\C_U)]=\sum_{F:\text{codim} F<k}\vert \mu(F)\vert N_F.$$
%\end{prop} 
%\begin{proof}
%\end{proof}
\centerline{\bf Acknowledgements}

\bigskip

\noindent
We want to thank Rolf K{\"a}llstr{\"o}m for discussions on these topics, as well as Nero Budur and Dan Petersen for directing us to 
\cite{BuS} respectively \cite{EL} and \cite{Pet}. The second author gratefully acknowledges financing
by SIDA/ISP.

\end{document}